\newtheorem{theorem}{Theorem}[section]
\newtheorem{remark}[theorem]{Remark}
\newtheorem{conjecture}[theorem]{Conjecture}
\begin{document}

\title[]{Ambient Lipschitz equivalence of real surface singularities}

\author[L.~Birbrair]{Lev Birbrair*}\thanks{*Research supported under CNPq 302655/2014-0 grant and by Capes-Cofecub}
\address{Dept Matem\'atica, Universidade Federal do Cear\'a
(UFC), Campus do Picici, Bloco 914, Cep. 60455-760. Fortaleza-Ce,
Brasil} \email{birb@ufc.br}

\author[A.~Gabrielov]{Andrei Gabrielov**}\thanks{**Research supported by the NSF grants DMS-1161629 and DMS-1665115 }
\address{Dept Mathematics, Purdue University, 150 N. University Street, West Lafayette, IN 47907-2067, USA } \email{gabriea@purdue.edu}

\begin{abstract}
We present a series of examples of pairs of singular semialgebraic surfaces (germs of real semialgebraic sets of dimension two) in ${\mathbb R}^3$ and ${\mathbb R}^4$ which are bi-Lipschitz equivalent with respect to the outer metric, ambient topologically equivalent, but not ambient Lipschitz equivalent. For each singular semialgebraic surface $S\subset {\mathbb R}^4$, we construct infinitely many semialgebraic surfaces which are bi-lipschitz equivalent with respect to the outer metric, ambient topologically equivalent to $S$, but pairwise ambient Lipschitz non-equivalent.
\end{abstract}

\maketitle

\section{Introduction}

There are three different classification questions in Lipschitz Geometry of Singularities. The first question is the classification of singular sets with respect to the inner metric, where the distance between two points of a set $X$ is counted as an infimum of the lengths of arcs inside $X$ connecting the two points. The equivalence relation is the bi-Lipschitz equivalence with respect to this metric. The second equivalence relation is the bi-Lipschitz  equivalence defined by the outer metric, where the distance is defined as the distance in the ambient space. It is well known that the two classifications are not equivalent. For example, all germs of irreducible complex curves are inner bi-Lipschitz equivalent, but the question of the outer classification is much more complicated (see Pham-Teissier \cite{PT} and Fernandes \cite{F}).
Here we consider another natural equivalence relation. Two germs of semialgebraic sets are called ambient Lipschitz equivalent if there exists a germ of a bi-Lipschitz homeomorphism of the ambient space transforming the germ of the first set to the germ of the second one.
Two outer bi-Lipschitz equivalent sets are always inner bi-Lipschitz equivalent, but can be ambient topologically non-equivalent (see Neumann-Pichon \cite{NP}).
The main question of the paper is the following. Suppose we have two germs of semialgebraic sets bi-Lipschitz equivalent with respect to the outer metric. Suppose that the germs are ambient topologically equivalent. Does it imply that the sets are ambient Lipschitz equivalent?
In this paper, we present four examples of the germs of surfaces for which the answer is negative. The surfaces in Examples 1, 2 and 3 are ambient topologically equivalent and bi-Lipschitz equivalent with respect to the outer metric, but their tangent cones at the origin are not ambient topologically equivalent. By the theorem, recently proved by Sampaio \cite{S}, ambient Lipschitz equivalence of two sets implies ambient Lipschitz equivalence of their tangent cones. Thus the sets in our three examples cannot be ambient Lipschitz equivalent.
In Example 4, the tangent cones of the two surfaces at the origin are ambient topologically equivalent.
The argument in that case is more delicate and requires a special ``broken bridge'' construction.
The last part of the paper is devoted to the proof of the main theorem of the paper: For any germ of a semialgebraic surface $S$ in ${\mathbb R}^4$ there exist infinitely many semialgebraic surfaces, such that all these surfaces are ambient topologically equivalent to $S$, bi-Lipschitz equivalent with respect to the outer metric, but any two of them are not ambient Lipschitz equivalent. To prove this theorem, we generalize the broken bridge construction of Example 4.

The question on the relation of these classifications was posed to us by Alexandre Fernandes and Zbigniew Jelonek. We thank them for posing the question.
We would like to thank Anne Pichon for her comments and suggestions.

\section{Examples in $\mathbb{R}^3$}

\noindent{\bf Example 1.} Consider semialgebraic sets $X_1$ and $X_2$ in $\mathbb{R}^3 $ (see Fig.~\ref{fig:example1}) defined by the following equations and inequalities: \vspace{0,3cm}
\begin{equation}\label{X1}
\begin{array}{ccc}
  X_1 = \big\{ \left( \left( x^2-2xt+y^2 \right) \left( x^2+2xt+y^2 \right) - t^k \right)\times\\

\left( { \left( x - t \right) }^2 + { \left( y - \frac{t}{2} \right) }^2 - \frac{t^2}{16} \right) \left( { \left( x - t \right) }^2 + { \left( y + \frac{t}{2} \right) }^2 - \frac{t^2}{16} \right) = 0,\\

  t \geq 0\big\}.
 \end{array}
\end{equation}
\begin{equation}\label{X2}
\begin{array}{ccc}
X_2 = \big\{ \left( \left( x^2 - 2xt + y^2 \right) \left( x^2 + 2xt + y^2 \right) - t^k \right)\times\\

\left( { \left( x - t \right) }^2 + { y  }^2 - \frac{t^2}{16} \right) \left( { \left( x + t \right) }^2 + {  y  }^2 - \frac{t^2}{16} \right) = 0, \\

 t \geq 0\big\}.
 \end{array}
\end{equation}
\noindent Here $k>4$ is an integer.

\begin{figure}
 \centering
 \includegraphics[width=3.5in]{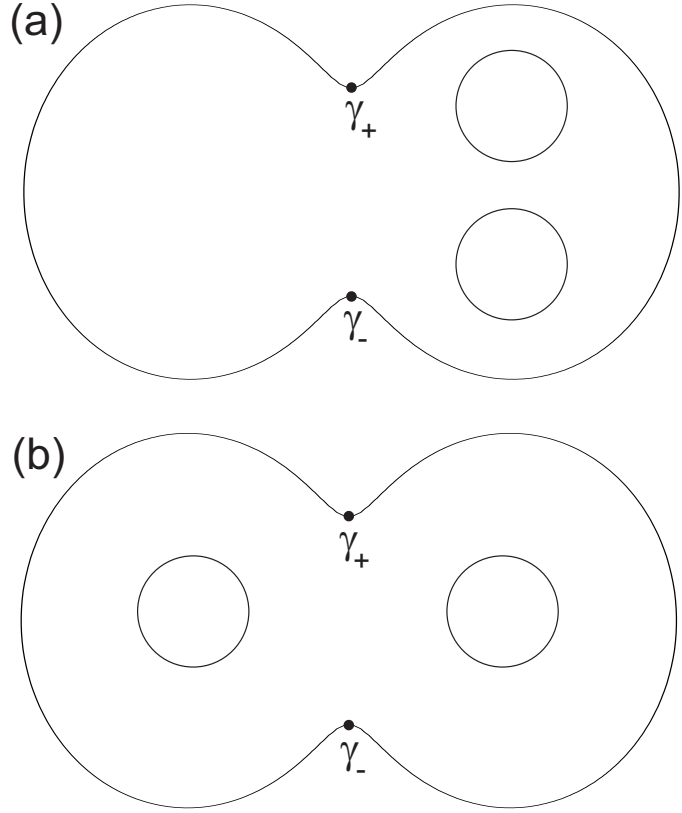}
 \caption{Links of the surfaces (a) $X_1$ and (b) $X_2$ in Example 1. }\label{fig:example1}
 \end{figure}

\begin{theorem}\label{th:example1}
 The germs at the origin of the surfaces $X_1$ and $X_2$ are bi-Lipschitz equivalent with respect to the outer metric, ambient topologically equivalent, but not ambient Lipschitz equivalent.
\end{theorem}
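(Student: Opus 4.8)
The plan is to exhibit each surface as a union of three pieces and to separate the topological from the metric information carried by each. Writing $(x,y,t)$ for the coordinates with $t\ge 0$, I first note that the first factor is common to $X_1$ and $X_2$: since $(x^2-2xt+y^2)(x^2+2xt+y^2)=(x^2+y^2)^2-4x^2t^2$, the common piece $A$ is $\{(x^2+y^2)^2-4x^2t^2=t^k\}$. I would analyze the plane curve $C_t=A\cap\{t=\mathrm{const}\}$. The gradient of $f=(x^2+y^2)^2-4x^2t^2$ vanishes at the origin (a saddle, value $0$) and at $(\pm\sqrt2\,t,0)$ (minima, value $-4t^4$); since $k>4$ we have $0<t^k\ll t^4$, so $C_t$ sits just above the saddle value and is a single simple closed ``peanut'' curve encircling two lobes of diameter $\sim t$ centered near $(\pm t,0)$, pinched to width $\sim t^{k/4}$ at the neck (it meets the $y$-axis at $y=\pm t^{k/4}$). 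The remaining two factors give, for $X_1$, circles of radius $t/4$ about $(t,\pm t/2)$, both inside the right lobe, and for $X_2$, circles of radius $t/4$ about $(\pm t,0)$, one inside each lobe. For $t>0$ all three components are pairwise disjoint, so each germ is a union of three topological cones meeting only at the origin.

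For the ambient topological equivalence I would invoke the local conical structure of semialgebraic germs: it suffices to produce a homeomorphism of the ambient plane carrying the configuration of $X_1$ at a fixed small level $t$ to that of $X_2$, and then cone it off. At level $t$ both configurations consist of one peanut curve bounding a disk, together with two disjoint circles in the interior of that disk, neither nested in the other. Any two such plane configurations are ambient homeomorphic; the fact that the inner circles occupy the same lobe for $X_1$ and different lobes for $X_2$ is invisible once the bounding curve is viewed abstractly as a simple closed curve. Coning this fiberwise homeomorphism yields an ambient homeomorphism of germs.

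For the outer bi-Lipschitz equivalence I would define $\Phi\colon X_1\to X_2$ to be the identity on the common piece $A$ and, on the small circles, the fiberwise angle-preserving bijections sending the circle about $(t,t/2)$ to the one about $(t,0)$ and the circle about $(t,-t/2)$ to the one about $(-t,0)$. At each level these maps displace points by $\lesssim t$, i.e.\ by a bounded multiple of their distance to the origin, and the three components stay mutually separated by distances $\asymp t$ that are preserved up to a bounded factor, while $A$ (and hence its thin neck) is mapped isometrically. Combined with the self-similarity of the small-circle cones under $(x,y,t)\mapsto\lambda(x,y,t)$, this gives uniform bi-Lipschitz constants down to the origin. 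Since the outer metric records only ambient distances between points of the surface and cannot see which lobe a component occupies, $\Phi$ is an outer bi-Lipschitz homeomorphism.

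The heart of the argument is that the tangent cones at the origin are not ambient topologically equivalent, whence by Sampaio's theorem \cite{S} the surfaces cannot be ambient Lipschitz equivalent. Substituting $x=t\xi,\ y=t\eta$, the common piece becomes $(\xi^2+\eta^2)^2-4\xi^2=t^{k-4}\to0$, i.e.\ the lemniscate $\xi^2+\eta^2=2|\xi|$; thus the tangent cone of $A$ is the cone over two unit circles tangent at the origin (a figure-eight), the two sheets meeting along the $t$-axis. The small circles rescale to circles of radius $1/4$ about $(1,\pm1/2)$ for $X_1$ and about $(\pm1,0)$ for $X_2$. Hence the link of each tangent cone is a figure-eight together with two small circles lying inside its bounded complementary regions, but for $X_1$ both lie in one region while for $X_2$ each region contains one. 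Since any ambient self-homeomorphism of a figure-eight must fix its unique singular point and therefore permute its two bounded complementary regions, the multiset $\{2,0\}$ versus $\{1,1\}$ of contained circles is an ambient-topological invariant, so the tangent cones are not ambient topologically equivalent. I expect the main obstacle to be the outer bi-Lipschitz step, where the distortion of $\Phi$ must be controlled uniformly across all scales as $t\to0$ and the naive ``close to the identity'' estimate fails (points move by an amount comparable to, not small relative to, the scale), so the self-similar, uniformly separated conical structure of the three components is essential; by contrast the tangent-cone computation, though the conceptual crux, is a direct rescaling.
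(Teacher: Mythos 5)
Your proposal is correct and follows essentially the same route as the paper: the same three-piece decomposition (the common ``peanut'' surface plus two circle-cones), the same piecewise outer bi-Lipschitz map given by fiberwise translations of the small circles (the paper writes these as explicit linear maps $\varphi,\psi$), the same cone-over-three-circles topological equivalence, and the same conclusion via non-equivalence of tangent cones plus Sampaio's theorem. The only difference is that you supply slightly more detail than the paper at two points --- the level-curve analysis of the common piece and the figure-eight argument showing the tangent-cone links are ambient topologically inequivalent, which the paper simply asserts --- both of which are sound.
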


\begin{proof}
Notice that $X_1 = U_1\, \cup\, U_2\, \cup\, U_3$, where
$$U_1=\left\{\left( {\left( x - t \right) }^2 + y^2 - t^2 \right) \left( { \left( x + t \right) }^2 + y^2 - t^2 \right) = t^{k},\; t \geq 0\right\}$$
and the sets \ $U_2 ,\, U_3$ \ are straight cones over the circles \ $ {( x - 1)}^2 + {( y - \frac{1}{2})}^2 = \frac{1}{16}$ \ and \  $ ( x - 1)^2 + ( y + \frac{1}{2})^2 = \frac{1}{16}.$
The set $X_2$ is the union of $U_1$  and the sets $V_2 ,\, V_3$ which are straight cones over the circles \ $( x - 1 )^2 + y^2 = \frac{1}{16}$ \ and \ ${( x + 1 )}^2 + y^2 = \frac{1}{16}.$

Notice that \ $U_2 ,\, U_3 ,\, V_2$ \ and \ $V_3$ \ are linearly (thus bi-Lipschitz) equivalent. In particular, there exist invertible linear maps $\varphi:U_2\to V_2$ and $\psi: U_3\to V_3$
(one can define $\varphi(x,y,t)=(x,y-\frac{t}2,t)$ and $\psi(x,y,t)=(x-2t,y+\frac{t}2,t)\;$).
Observe that $U_2\cup U_3$ and $V_2\cup V_3$ are normally embedded. Moreover, there exist positive constants \ $c_1 , c_2$ \ such that for any point \ $ p = ( x, y , t)\in U_2\cup U_3\cup V_2\cup V_3$ one has
$c_1 t \ < \ d ( p, U_1 ) \ < \ c_2 t$. Thus the map $\phi:X_1\to X_2$ defined as

\vspace{0,5cm}
$\phi(p) = $ $\left\{
\begin{array}{cccc}
  \ \ \ p \ \ \ \ \mbox{if}\ \ \ p \in U_1 \\
  \varphi(p) \ \ \mbox{if} \ \ \ p \in U_2 \\
  \psi(p) \ \ \mbox{if} \ \ \ p \in U_3 \\
  \end{array}
 \right.
$
\vspace{0,5cm}

\noindent is bi-Lipschitz with respect to the outer metric.

The sets $X_1$ and $X_2$ are ambient topologically equivalent, each of them being equivalent to a cone over the union of three disjoint circles in the plane $t=1$,
two of them bounding non-intersecting discs inside a disc bounded by the third one.

However, the tangent cones to $X_1$ and $X_2$, defined by the homogeneous parts of degree $4$ of (\ref{X1}) and (\ref{X2}), are not ambient topologically equivalent.
The tangent cone of $X_1$ at the origin is the union of $U_2$, $U_3$ and a straight cone $W$ over two tangent circles $( x - 1 )^2 + y^2 = 1$ and $( x + 1 )^2 + y^2 =1$ in the plane $t=1$,
with the cones $U_2$ and $U_3$ inside one of the two circular cones of $W$, while the tangent cone of $X_2$ is the union of $V_2$, $V_3$ and $W$, with the cones $V_2$ and $V_3$ inside two
different cones of $W$. Thus $X_1$ and $X_2$ are not ambient bi-Lipschitz equivalent, by the theorem of Sampaio \cite{S}.
This happens, of course, because $U_1$ is not normally embedded, with the arcs $\gamma_+$ and $\gamma_-$ in $U_1\ \cap \{x=0\}$ (see Fig.~\ref{fig:example1}) having the tangency order $k/4 > 1$.
\end{proof}

\begin{figure}
 \centering
 \includegraphics[width=4in]{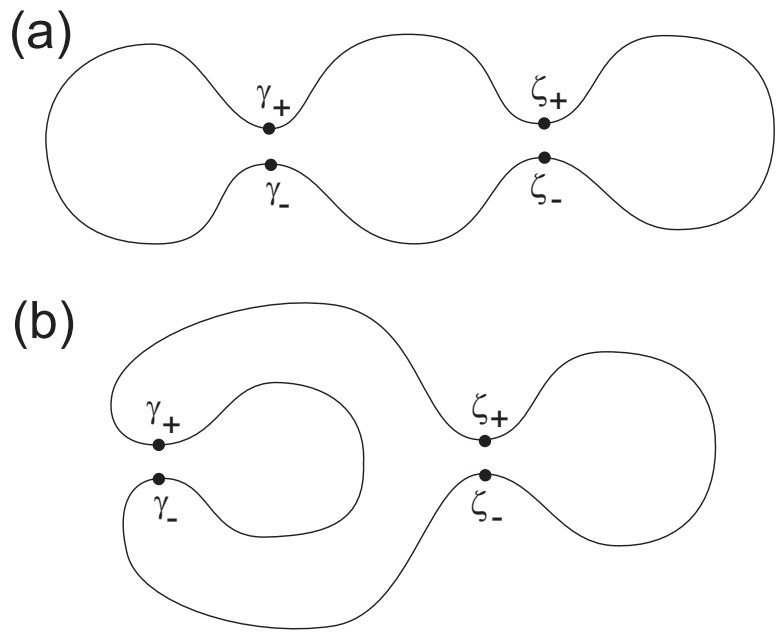}
 \caption{Links of the surfaces (a) $X_1$ and (b) $X_2$ in Example 2.}\label{fig:example2}
 \bigskip
 \end{figure}

\begin{figure}
 \centering
 \includegraphics[width=4in]{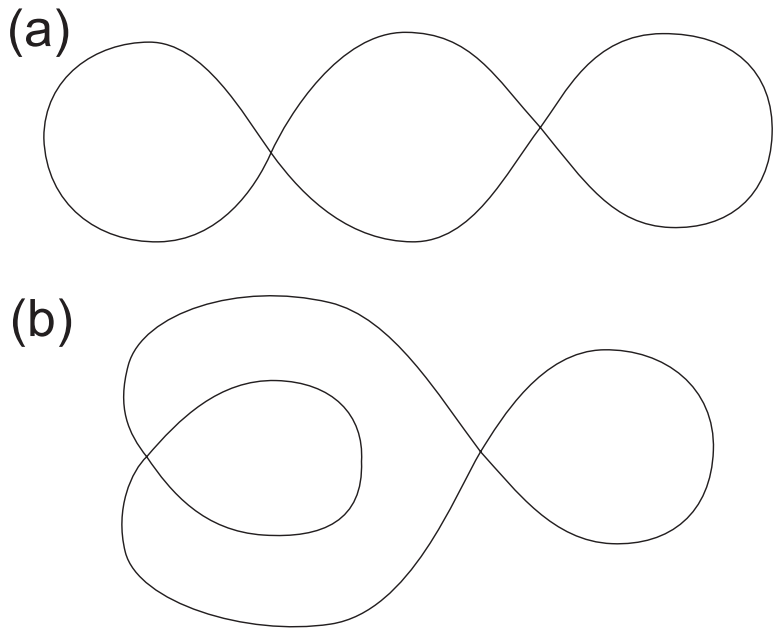}
 \caption{Links of the tangent cones at the origin of the surfaces (a) $X_1$ and (b) $X_2$ in Example 2.}\label{fig:example2-tangentcone}
 \end{figure}

{\bf Example 2.} Let $X_1$ and $X_2$ be semialgebraic surfaces in $\mathbb R^3$ with the links at the origin shown in Fig.~\ref{fig:example2}, and tangent cones at the origin
as in Fig.~\ref{fig:example2-tangentcone}. One can define $X_1$ and $X_2$ by explicit semialgebraic formulas, similarly to the method employed in Example 1.
Both surfaces $X_1$ and $X_2$ are ambient topologically equivalent to a cone over a circle. These surfaces are bi-Lipschitz equivalent with respect
to the outer metric, but not ambient Lipschitz equivalent by Sampaio's theorem, since their tangent cones are not ambient topologically equivalent.
The arguments are similar to those in Example 1.

\section{Examples in ${\mathbb R}^4$}

{\bf Example 3.} Let $H \subset \mathbb{R}^4$ be a surface defined as follows:
\begin{equation*}
\left\{
 y^2 - x^2 = { ( x^2 + y^2 - 2 t^2 )}^{2},\; z=0,\; |y| \leq t \leq 1
 \right\}.
\end{equation*}
The surface $H$ has two branches, tangent at the origin. It is bounded by the straight lines
$$l_1 = ( z = 0 ,\; y = x = t ), \quad l_2 = ( z = 0,\; y = -x = t ),$$
$$l_3 = ( z = 0,\; y = -x = -t ),\quad l_4 = ( z = 0,\; y = x = -t).$$
The tangent cone of $H$ at the origin is the surface
$$\left\{ y = \pm x,\; z = 0,\; |y| \le t \right\}.$$
The link of $H$ (more precisely, the section of $H$ by the plane $\{z = 0, \; t = 1/8\}\;$) is shown in Fig.~\ref{fig:H}.
The arcs ${\gamma}_{+} $ and ${\gamma}_{-}$ are tangent at the origin.

Let $K_1 , K_2 , K_3$ be nontrivial knots in $\mathbb R^3$ such that $K_3$ is a connected sum of $K_1$ and $K_2$.
Let $X_1$ be a surface in $\mathbb{R}^4$, obtained as follows.

Consider a smooth semialgebraic embedding $\widetilde{K}_1$ of the knot $K_1$ to the hyperplane $\{t=1\}$ in $\mathbb{R}^4_{x,y,z,t}$.
Suppose that $\widetilde{K}_1$ contains the points $(1, 1, 0, 1)\in l_1$ and $(1, -1, 0, 1)\in l_3$, and that $\widetilde{K}_1 \cap H$ contains only these points. Let $s_1 \subset \widetilde{K}_1$ be the segment connecting the points $(1, 1, 0, 1)$ and $(1, -1, 0, 1)$ such that replacing this segment by a straight line segment does not change the embedded topology of $\widetilde{K}_1$. Let $\widetilde{K}_2$ be a smooth semialgebraic realization of $K_2$, in the same hyperplane of $\mathbb{R}^4$.
Suppose that $\widetilde{K}_2$ contains the points $( - 1, 1, 0,1)\in l_2$ and $(-1, -1, 0, 1)\in l_4$, and that a segment $s_2$ of $\widetilde{K}_2$ connecting these points may be replaced by a straight line segment without changing the embedded topology of $\widetilde{K}_2$. Suppose that $\widetilde{K}_2 \cap  H $ contains only the points $(-1 , 1 , 0, 1)$ and $( - 1, -1, 0, 1)$, and that $ \widetilde{K}_2 \cap \widetilde{K}_1 =  \emptyset$.

Let $K_1'$ be the straight cone over $\widetilde{K}_1 - {s}_1$ and let $K_2'$ be the straight cone over $\widetilde{K}_{2}-{s}_2$. Let $X_1 = K_1' \cup H \cup K_2'.$ The link of the set $X_1$ is shown in Fig.~\ref{fig:knot1}a.

\begin{figure}
 \centering
 \includegraphics[width=3in]{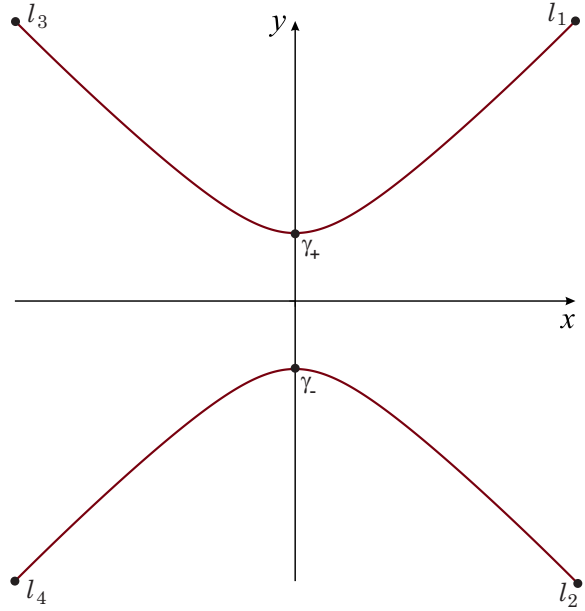}
 \caption{Link of the surface $H$ in Example 3.}\label{fig:H}
 \end{figure}

\begin{figure}
 \centering
 \includegraphics[width=4.5in]{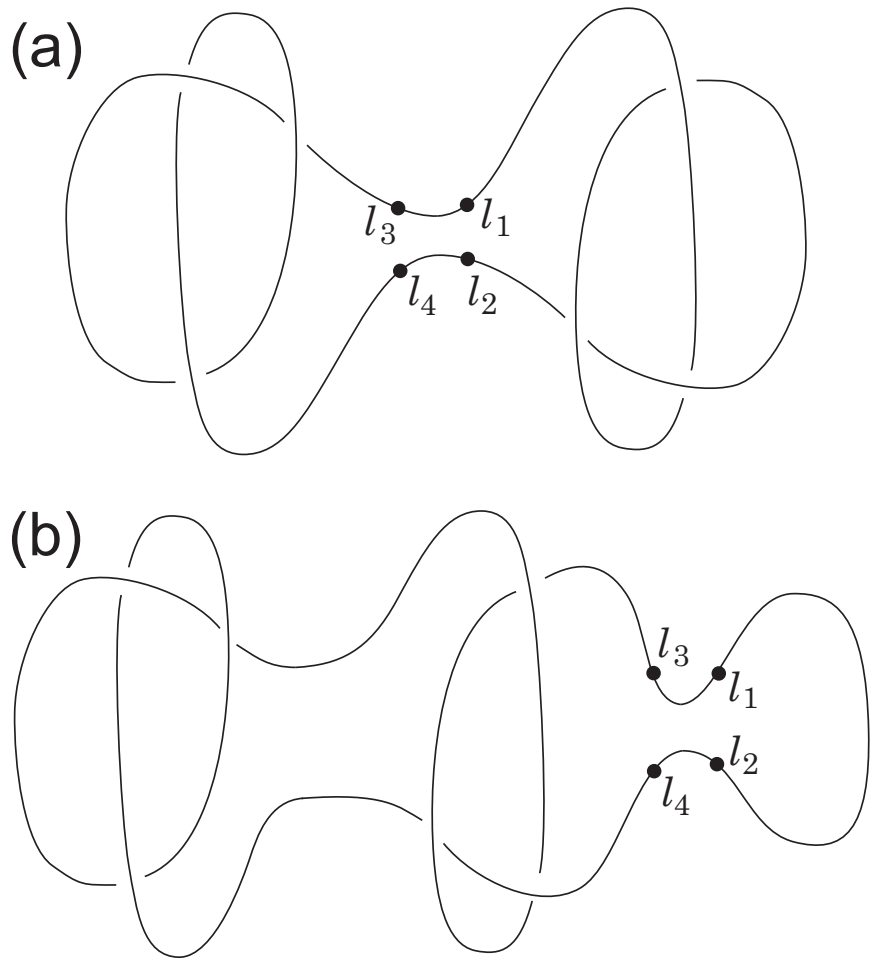}
 \caption{Links of the surfaces (a) $X_1$ and (b) $X_2$ in Example 3.}\label{fig:knot1}
 \end{figure}

Let us define the set $X_2$ using the same construction as above, with the knot $K_1$ replaced by $K_3$ and the knot $K_2$ by the unknotted circle $K_4$.
We assume, as before, that a smooth semialgebraic realisation $\widetilde{K}_3$ of $K_3$ contains points $(1, 1, 0, 1)$ and $(1, -1, 0, 1)$, that $\widetilde{K}_3 \cap H$ contains only these points, and that replacing the segment $s_3$ of $\widetilde{K}_3$ connecting these points by a straight line segment does not change the embedded topology of $\widetilde{K}_3$. Similar assumptions are made about a smooth semialgebraic embedding
 $\widetilde{K}_4$ of $K_4$ and its segment $s_4$ connecting the points $(-1 , 1 , 0, 1)$ and $( - 1, -1, 0, 1)$.
Let $K_3'$ and $K_4'$ be the straight cones over $\widetilde{K}_3 - s_3$ and $\widetilde{K}_4 - s_4$. Let $X_2 = K_3' \cup H \cup K_4'$ (see Fig.~\ref{fig:knot1}b).

\begin{theorem}\label{thm:ex3} The germs of the sets $X_1$ and $X_2$ at the origin are bi-Lipschitz equivalent with respect to the outer metric, ambient topologically equivalent, but not ambient bi-Lipschitz equivalent.
\end{theorem}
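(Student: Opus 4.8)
The plan is to prove the three clauses in turn, modeling the metric construction on Example~1 and deducing the final (negative) clause from Sampaio's theorem via the tangent cones.

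For the \emph{outer bi-Lipschitz equivalence}, I would define $\phi\colon X_1\to X_2$ to be the identity on the common piece $H$ and to carry $K_1'$ onto $K_3'$ and $K_2'$ onto $K_4'$. The point is that a straight cone over a smooth arc is, as a germ at the cone vertex, bi-Lipschitz equivalent to a flat two-dimensional sector, and this equivalence depends only on the two endpoints of the arc, not on how the arc is knotted in $\{t=1\}$. Hence $K_1'$ and $K_3'$ (respectively $K_2'$ and $K_4'$) are bi-Lipschitz equivalent germs, and the equivalences can be chosen to fix the bounding lines $l_1,l_3$ (respectively $l_2,l_4$). Just as in Example~1, each cone piece is normally embedded and sits at distance comparable to $t$ from $H$, so the piecewise map $\phi$ is globally bi-Lipschitz for the outer metric.

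For the \emph{ambient topological equivalence}, I would invoke the local conic structure of semialgebraic germs: each $X_i$ is homeomorphic to the cone over its link in $S^3$, so it is enough to show that the two links are isotopic. The link of $H$ consists of two disjoint arcs, an upper arc $\gamma_+$ joining $l_1$ to $l_2$ and a lower arc $\gamma_-$ joining $l_3$ to $l_4$; together with the knotted arcs coming from the cone pieces they close up into a single loop $l_1\xrightarrow{\gamma_+}l_2\xrightarrow{K_2}l_4\xrightarrow{\gamma_-}l_3\xrightarrow{K_1}l_1$. Since $\gamma_+,\gamma_-$ are unknotted, unlinked bridges and the excised segments $s_i$ may be straightened without changing the embedded type, the two knotted arcs are spliced in series and the loop has knot type $K_1\#K_2$. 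The identical computation for $X_2$ gives $K_3\#K_4=(K_1\#K_2)\#\,\mathrm{unknot}=K_1\#K_2$. The links are thus the same knot, and extending the isotopy conically yields an ambient homeomorphism of the germs.

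The \emph{negative clause} is where the tangency of $\gamma_\pm$ is used, and this is the step I expect to be the main obstacle. Because $\gamma_+$ and $\gamma_-$ are tangent at the origin, the tangent cone of $H$ degenerates to the two half-planes $\{y=x\}$ and $\{y=-x\}$ (with $z=0$, $|y|\le t$), which now meet \emph{transversally} along the $t$-axis. Consequently the series bridge of the honest link is replaced by a single crossing: the link of $\mathrm{tc}(X_i)$ is no longer a knot but a spatial graph with one $4$-valent vertex $O$ and two loops based at $O$ (a wedge of two circles), where the strand $\{y=x\}$ runs $l_4\,O\,l_1$ and the strand $\{y=-x\}$ runs $l_2\,O\,l_3$. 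The two constituent loops through $O$ then carry the individual knot types of the cone pieces: for $\mathrm{tc}(X_1)$ they realize the unordered pair $\{K_1,K_2\}$, whereas for $\mathrm{tc}(X_2)$ they realize $\{K_1\#K_2,\ \mathrm{unknot}\}$. Any ambient homeomorphism of $S^3$ matching the two graphs must fix the unique $4$-valent vertex and hence match these pairs of constituent knots; but the unordered pairs of Seifert genera are $\{g(K_1),g(K_2)\}$, with both entries positive, and $\{g(K_1)+g(K_2),0\}$, which differ because the latter contains $0$. Therefore the tangent cones are not ambient topologically equivalent, and by Sampaio's theorem \cite{S} the germs $X_1$ and $X_2$ cannot be ambient bi-Lipschitz equivalent. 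The delicate point throughout is to verify that the tangency of $\gamma_\pm$ is exactly what turns the two disjoint bridges of the honest link into a single crossing of the tangent cone, so that the invariant that fails to distinguish $X_1$ from $X_2$ (a single knot type) is replaced by one that succeeds (the constituent knots of a spatial graph).
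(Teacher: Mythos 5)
Your proposal follows the paper's own proof essentially step by step: the same piecewise map (identity on $H$, level-preserving cone maps $K_1'\to K_3'$ and $K_2'\to K_4'$) for the outer equivalence; the same identification of both links with the knot $K_1\#K_2=K_3$ for the ambient topological equivalence; and the same reduction of the negative clause to the tangent cones (wedge of $K_1$ and $K_2$ versus wedge of $K_3$ and an unknot, pinched at one point), concluded by Sampaio's theorem. Your genus argument for why the two wedge links are not ambient homeomorphic is a correct addition that the paper omits (it merely asserts the tangent cones are topologically distinct): an ambient homeomorphism must send the unique $4$-valent point to the $4$-valent point, hence match constituent loops, and the unordered pair $\{g(K_1),g(K_2)\}$ with both entries positive cannot equal $\{g(K_1)+g(K_2),\,0\}$.

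One supporting claim in your outer-equivalence step is false as stated: unlike Example 1, the cone pieces here do \emph{not} sit at distance comparable to $t$ from $H$. The cones $K_1'$ and $K_2'$ meet $H$ along the lines $l_1,l_3$ and $l_2,l_4$ respectively, so the distance degenerates along those lines. What actually makes the glued map bi-Lipschitz is the combination of (i) your (correct) normalization that $\phi_1,\phi_2$ preserve $t$-levels and fix the attaching lines pointwise, (ii) the distance estimate $d(p,H)\gtrsim t$, which is valid only for points $p$ of the cones at distance at least $\epsilon t$ from the attaching lines, and (iii) near an attaching line, the fact that the knot arcs meet the arcs $\gamma_\pm$ at a nonzero angle (which one may assume), so that for $p\in K_1'$ and $q\in H$ near $l_1$ one has $|p-r|+|r-q|\lesssim |p-q|$ with $r$ the nearest point of $l_1$ to $p$, allowing the Lipschitz estimate to be routed through the common boundary. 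This is a repairable local argument rather than a wrong approach --- the paper itself disposes of this step with the word ``clearly'' --- but the justification you gave would not survive scrutiny near $l_1,\dots,l_4$.
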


\begin{figure}
 \centering
 \includegraphics[width=4.5in]{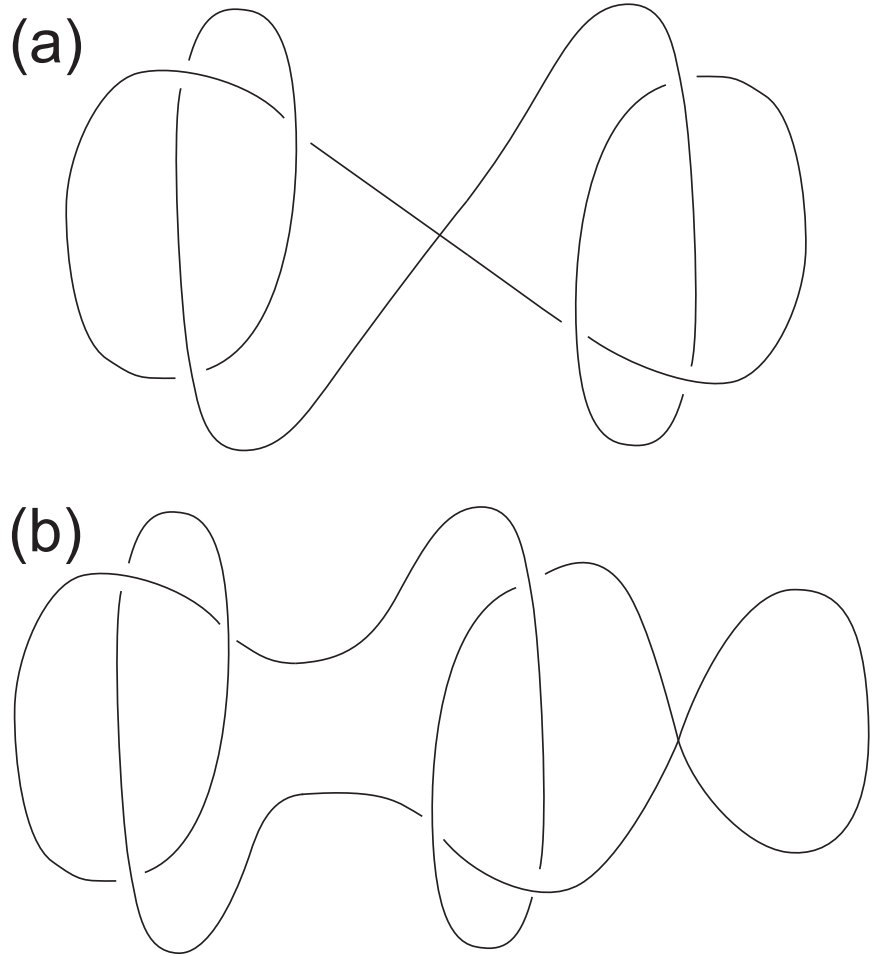}
 \caption{Links of the tangent cones at the origin of the surfaces (a) $X_1$ and (b) $X_2$ in Example 3.}\label{fig:knot1t}
 \end{figure}

\begin{proof} Since $ \widetilde{K}_1 ,\, \widetilde{K}_2 ,\, \widetilde{K}_3 ,\, \widetilde{K}_4$ are smooth, the corresponding cones $K_1'$, $K_2'$, $K_3'$, $K_4'$ are normally embedded and bi-Lipschitz equivalent with respect to the outer metric. The bi-Lipschitz maps $${\phi}_1 : K_1' \rightarrow K_3'\ \ \mbox{and}\ \ {\phi}_2 : K_2' \rightarrow K_4'$$ can be chosen in such a way that
$$ {\phi}_1 ( K_1' \cap \{ t = c \} ) \ = \ K_3' \cap \{ t = c \} \ \ \mbox{ for all } \ c > 0, \ \ \mbox{and} $$
$$ {\phi}_2 ( K_2' \cap \{ t = c \} ) \ = \ K_4' \cap \{ t = c \} \ \ \mbox{ for all } \ c > 0. $$

Then one can define the map $\phi$ as follows:

\vspace{0,5cm}
$\phi(x) = $ $\left\{
\begin{array}{cccc}
  {\phi}_1 (x) \ \ \ \ \ \mbox{if} \ \ \ x \in K_1' \\
 \ \ \ x \ \ \ \ \ \   \mbox{if} \ \ \ x \in H \\
  {\phi}_2 (x)  \ \ \ \ \ \mbox{if} \ \ \ x \in K_2' \\
  \end{array}
 \right.
$ \vspace{0,5cm}

Clearly, the map $\phi$ is a bi-Lipschitz map. The surfaces $X_1$ and $X_2$ are ambient topologically equivalent because their links are knots equivalent to $K_3$.
From the other hand, the corresponding tangent cones at the origin are not ambient topologically equivalent: the tangent cone of $X_1$ is a straight cone over the union of $K_1$ and $K_2$, pinched at one point
(see Fig.~\ref{fig:knot1t}a), while
the tangent cone of $X_2$ is a straight cone over the union of $K_3$ and the unknotted circle, pinched at one point (see Fig.~\ref{fig:knot1t}b).

By the theorem of Sampaio \cite{S}, the surfaces $X_1$ and $X_2$ are not ambient bi-Lipschitz equivalent.
\end{proof}

{\bf Example 4.}

For $1\le \beta<q$, define the set $A_{q,\beta}=T_+\cup T_-\subset {\mathbb R}^4$, where
$$T_\pm=\left\{ 0\le t\le 1,\; -t^\beta\le x\le t^\beta,\; y=\pm t^q,\; z=0\right\}$$
are two normally embedded $\beta$-H\"older triangles tangent at the origin with the tangency exponent $q$.
The set $A_{q,\beta}$ is called a $(q,\beta)$-bridge (see Fig.~\ref{fig:bridge}, left). The boundary of $A_{p,q}$ consists of the four arcs $$\{t\ge 0,\; x=\pm t^\beta,\; y=\pm t^q\}.$$
For some $p$ such that $\beta<p<q$, let $B_{p,q,\beta}$ be the set obtained from $A_{q,\beta}$ by removing from $T_+$ the $p$-H\"older triangle bounded by the arcs $\{t\ge 0,\; x=\pm t^p,\; y=t^q,\; z=0\}$,
and from $T_-$ the $p$-H\"older triangle bounded by the arcs $\{t\ge 0,\; x=\pm t^p,\; y=-t^q,\; z=0\}$), and replacing them by two $q$-H\"older triangles
$$\left\{ 0\le t\le 1,\; x=t^p,\; -t^q\le y \le t^q,\; z=0\right\}\ \ \mbox{and}$$
$$\left\{ 0\le t\le 1,\; x=-t^p,\; -t^q\le y \le t^q,\; z=0\right\}.$$
The set $B_{p,q,\beta}$ is called a broken $(q,\beta)$-bridge (see Fig.~\ref{fig:bridge}, right).

\begin{figure}
 \centering
 \includegraphics[width=4.5in]{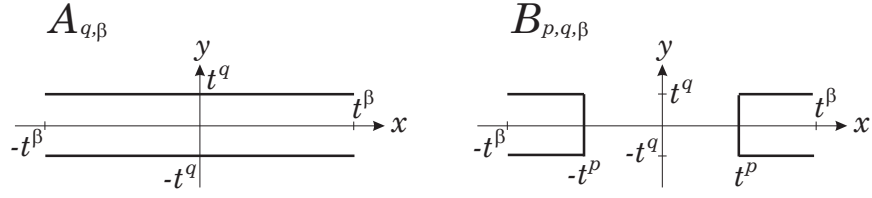}
 \caption{Links of a $(q,\beta)$-bridge $A_{q,\beta}$ and a broken $(q,\beta)$-bridge $B_{p,q,\beta}$.}\label{fig:bridge}
 \end{figure}

\begin{figure}
 \centering
 \includegraphics[width=4.5in]{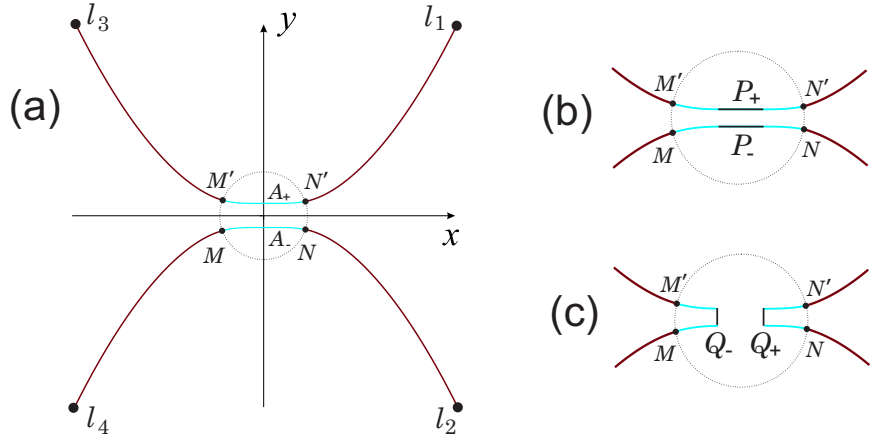}
 \caption{(a) Link of the surface $G$ in Example 4. (b) H\"older triangles $P_+$ and $P_-$. (c) Broken $(3,2)$-bridge $B$ with the H\"older triangles $Q_+$ and $Q_-$.}\label{fig:G}
 \end{figure}

Let $G \subset \mathbb{R}^4$ be a surface defined as follows:
\begin{equation*}
\left\{
 y^2 t^2 - x^4 = { ( x^2 + y^2 - 2 t^2 )}^{4},\; z=0,\; \mid y \mid\; \leq t \leq 1
 \right\}.
\end{equation*} \vspace{0,1cm}
The surface $G$ has two branches, tangent at the origin. It is bounded by the straight lines
$$l_1 = ( z = 0 ,\; y = x = t ), \quad l_2 = ( z = 0,\; -y = x = t ),$$
$$l_3 = ( z = 0,\; y = -x = t ),\quad l_4 = ( z = 0,\; y = x = -t).$$
The tangent cone of $G$ at the origin is the surface
$$\left\{ y^2 t^2 = x^4,\; z = 0,\; |y| \leq t \right\}.$$
The link of $G$ (more precisely, the section of $G$ by the plane $\{z = 0, \; t = 1/8\}\;$) is shown in Fig.~\ref{fig:G}a.
The intersection of $G$ with any surface $\left\{ x=c t^\mu \right\}$, where $\mu\ge 2$,
consists of two arcs having the tangency order $3$.
Thus $G$ contains a subset $A$, consisting of two normally embedded $2$-H\"older triangles $A_+$ and $A_-$ (see Fig.~\ref{fig:G}a where $[M,N]$ is the link of $A_-$ and $[M',N']$ is the link of $A_+$), which is ambient bi-Lipschitz equivalent to a $(3,2)$-bridge.
It is easy to check that such a subset $A$ is unique up to a bi-Lipschitz homeomorphism of $\mathbb R^4$ preserving $G$.

Consider two trivial knots $K_0$ and $K_1$ embedded in the hyperplane $\{t=1\}\subset \mathbb{R}^4_{x,y,z,t}$ as shown in Fig.~\ref{fig:knot2}a and Fig.~\ref{fig:knot2}b.
Suppose that each of these two knots contains the four points
$$(1, 1, 0, 1)\in l_1,\;(1, -1, 0, 1)\in l_2,\;( - 1, 1, 0,1)\in l_3,\;(-1, -1, 0, 1)\in l_4,$$
and that the intersection of each of the two knots
with the ball $U$ of radius $\sqrt{2}$ in $\{t=1\}$ consists of two unlinked segments $s_1$ and $s_2$ connecting $(1, 1, 0, 1)$ with $(-1, 1, 0, 1)$ and $( 1, -1, 0, 1)$ with $(-1, -1, 0, 1)$, respectively,
as shown in Figs.~\ref{fig:knot2}a and \ref{fig:knot2}c, where $U$ is shown as a dotted circle.
We assume also that the union of the two segments $s_1$ and $s_2$ coincides with $G\cap\{t=1\}$.

We define the surface $X_0$ as the union of $G$ and a straight cone over $K_0\setminus U$, and the surface $X_1$ as the union of $G$ and a straight cone over $K_1\setminus U$.

\begin{figure}
 \centering
 \includegraphics[width=4.5in]{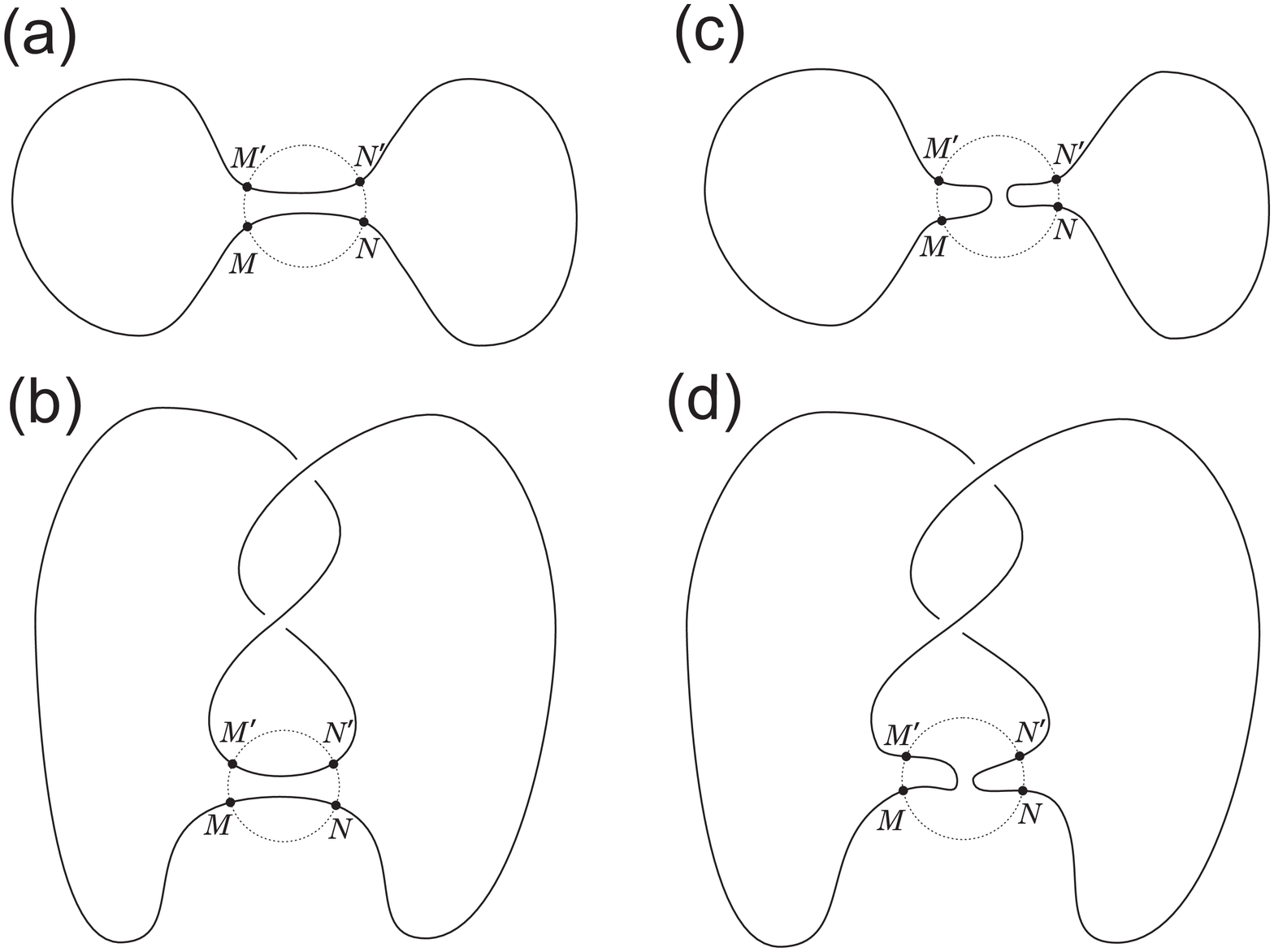}
 \caption{Links of the surfaces (a) $X_0$ and (b) $X_1$ in Example 4.}\label{fig:knot2}
 \end{figure}

\begin{theorem}\label{thm:ex4}
The germs of the surfaces $X_0$ and $X_1$ at the origin are bi-Lipschitz equivalent with respect to the outer metric, ambient topologically equivalent, but not ambient Lipschitz equivalent.
\end{theorem}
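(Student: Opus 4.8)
The first two assertions follow the pattern of Theorem~\ref{thm:ex3}. To see that $X_0$ and $X_1$ are outer bi-Lipschitz equivalent, I would build a map $\phi:X_0\to X_1$ equal to the identity on the common part $G$ and carrying the straight cone over $K_0\setminus U$ to the straight cone over $K_1\setminus U$. Since $K_0$ and $K_1$ are smooth semialgebraic unknots meeting the four boundary lines $l_1,\dots,l_4$ in the same four points and agreeing with $G$ inside $U$, the pairs of arcs $K_0\setminus U$ and $K_1\setminus U$ are bi-Lipschitz equivalent; their straight cones are normally embedded, and choosing $\phi$ to preserve the levels $\{t=c\}$ and to match along $l_1,\dots,l_4$ makes the glued map bi-Lipschitz for the outer metric, exactly as in Example 3. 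For ambient topological equivalence, observe that the link of $X_i$ is the knot $K_i$: the arcs of $K_i$ outside $U$ together with $G\cap\{t=1\}=s_1\cup s_2$ recover $K_i$, and since both $K_0,K_1$ are trivial, a homeomorphism of links extends conically to an ambient homeomorphism of germs.

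The essential point is the failure of ambient Lipschitz equivalence, and here Sampaio's theorem is unavailable: as noted in the Introduction, the tangent cones of $X_0$ and $X_1$ are ambient topologically equivalent. The plan is to extract from the Lipschitz geometry a finer, knot-theoretic invariant through the broken-bridge construction. First I would argue that any germ of an ambient bi-Lipschitz homeomorphism $\Phi$ with $\Phi(X_0)=X_1$ must preserve the bridge. Indeed, the only tangency occurring in $X_i$ lies inside $G$, the cones over $K_i\setminus U$ being normally embedded; hence the subset $A\subset G$ that is ambient bi-Lipschitz equivalent to a $(3,2)$-bridge is intrinsic to the Lipschitz type, and by its stated uniqueness up to bi-Lipschitz homeomorphisms preserving $G$, the map $\Phi$ carries the bridge $A\subset X_0$ to the bridge $A\subset X_1$, matching its two tangent H\"older triangles $P_+,P_-$ and its four ends.

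Next I would define the invariant. Breaking the bridge, that is, replacing $A$ by the broken $(3,2)$-bridge $B$ of Fig.~\ref{fig:G}c with the walls $Q_+,Q_-$, reconnects the four ends of the bridge in a new cyclic pattern and thereby converts $X_i$ into a surface $\widehat X_i$ whose link is a knot $\widehat K_i$ obtained from $K_i$ by the corresponding band reconnection. The key claim is that $\widehat K_i$, up to topological knot equivalence, is an invariant of the ambient Lipschitz type of $X_i$: since $\Phi$ preserves $A$ together with its breaking data, applying the same local surgery on both sides yields an ambient bi-Lipschitz (hence ambient topological) homeomorphism $\widehat X_0\to\widehat X_1$, which forces $\widehat K_0$ and $\widehat K_1$ to be equivalent knots. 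The embeddings $K_0,K_1$ of Fig.~\ref{fig:knot2}, however, are arranged so that, although both are trivial, the two outside arcs are clasped in $X_1$ but not in $X_0$ in a way invisible to the knot type of $K_i$ yet realized after the bridge reconnection, so that $\widehat K_0$ is unknotted while $\widehat K_1$ is a nontrivial knot. This contradiction completes the argument.

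The main obstacle is the invariance claim of the preceding paragraph: one must show that a bridge-preserving ambient bi-Lipschitz map can be upgraded to one respecting the breaking operation, so that it descends to $\widehat X_0\to\widehat X_1$. Concretely this requires (i) pinning down $\Phi$ near $A$ tightly enough that the cyclic order in which the four ends of $A$ attach to the rest of the surface, and hence the reconnection pattern produced by $B$, is preserved; and (ii) checking that the surgery replacing $A$ by $B$ can be carried out compatibly with $\Phi$ and with the outer metric, so that $\widehat X_0$ and $\widehat X_1$ are genuinely ambient homeomorphic rather than merely abstractly homeomorphic. Establishing that the rigid geometry of the tangent triangles $P_+,P_-$ determines this end-pairing, and that it is not destroyed by the H\"older freedom $\beta<p<q$ in the choice of break, is the delicate step; everything else is a routine adaptation of Example 3.
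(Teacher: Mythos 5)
Your overall strategy coincides with the paper's: use the uniqueness of the $(3,2)$-bridge $A\subset G$ to normalize the hypothetical ambient bi-Lipschitz map so that it preserves $A$, then perform the broken-bridge surgery on both sides and derive a topological contradiction. However, the execution has a factual error and a genuine gap. The error: the topological outcome of the surgery is misidentified. The link of the broken bridge $B$ consists of the two arcs joining $M'$ to $M$ and $N'$ to $N$ (each running along a remnant of $A_\pm$ and down a wall $Q_\pm$), so replacing the segments $[M',N']$ and $[M,N]$ of the circle $K_i$ by these transversal arcs \emph{disconnects} the circle: tracing the connectivity shows the link of the broken surface is a \emph{two-component link}, not a single knot $\widehat K_i$. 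Your claim that ``$\widehat K_0$ is unknotted while $\widehat K_1$ is a nontrivial knot'' therefore does not describe the actual surfaces; the invariant that works (and the one the paper uses) is the linking number of the two resulting circles -- unlinked for $\widetilde X_0$, linked for $\widetilde X_1$ -- which is indeed produced by the clasp you correctly identified in $K_1$.

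The gap: you explicitly leave your ``main obstacle'' -- upgrading $\Phi$ to one ``respecting the breaking operation,'' items (i) and (ii) -- unresolved, and this is precisely the step that needs an idea. The paper's resolution is much simpler than what you envision: no upgrading is needed, because the broken surface on the $X_1$ side is \emph{defined} as the image $\widetilde X_1 := h(\widetilde X_0)$, i.e., $X_1$ with $A=h(A)$ replaced by $h(B)$. Ambient (bi-Lipschitz, hence topological) equivalence of $\widetilde X_0$ and $\widetilde X_1$ is then automatic, and the only thing to verify is that $h(B)$ still reconnects the four boundary arcs of $A$ in the transversal pattern $\{M,M'\}$, $\{N,N'\}$ (up to renaming). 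This follows from Lipschitz rigidity alone: the two pairs of boundary arcs of $A$ with contact exponent $3$ must be mapped by the bi-Lipschitz map $h$ to pairs with contact exponent $3$, so the end-pairing cannot degenerate into a diagonal pattern. With that, the contradiction (two unlinked circles versus two linked circles) is immediate, and none of the delicate analysis of $P_\pm$, $Q_\pm$, or of the freedom in choosing $p\in(2,3)$, that you flag as the hard part is actually required.
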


\begin{proof}
Suppose that $X_0$ and $X_1$ are ambient Lipschitz equivalent.
Let $h:\mathbb R^4\to\mathbb R^4$ be a bi-Lipschitz homeomorphism such that $h(X_0)=X_1$.
The set $A'=h(A)$ is ambient Lipschitz equivalent to a $(3,2)$-bridge.
For any arc $\gamma\subset A'$ there is an arc $\gamma'\subset A'$
such that the inner distance in $X_1$ between $\gamma$ and $\gamma'$ has exponent 1, but the outer distance between them has exponent 3.
No such arcs exist outside $G$. Due to the uniqueness of a $(3,2)$-bridge in $G$ up to ambient Lipschitz equivalence, there is
a bi-Lipschitz homeomorphism $h'$ of $\mathbb R^4$ preserving $G$ and mapping $A'$ to $A$.
Moreover, we may assume $h'$ to be identity outside $U$, thus $h'(X_1)=X_1$.
Combining $h$ with $h'$, we may assume that $h(A)=A$.

For $p\in(2,3)$, let $P\subset A$ be the union of two $p$-H\"older triangles $P_+$ and $P_-$ that should be removed from $A$ and replaced by two $q$-H\"older triangles $Q_+$ and $Q_-$
to obtain a broken $(3,2)$-bridge $B$ (see Figs.~\ref{fig:G}b and \ref{fig:G}c).
Define the surface $\widetilde{X}_0$ (see Fig.~\ref{fig:knot2}c) by replacing $A\subset X_0$ with $B$, and
the surface $\widetilde{X}_1=h(\widetilde{X}_0)$ (see Fig.~\ref{fig:knot2}d) by replacing $A=h(A)\subset X_1$ with $h(B)$.
Then $\widetilde{X}_0$ and $widetilde{X}_1$ are not ambient topologically equivalent: the link of $\widetilde{X}_0$ consists of two unlinked circles, while the link of $\widetilde{X}_1$ consists of two linked circles.
This contradicts our assumption that $X_0$ and $X_1$ are ambient Lipschitz equivalent.
\end{proof}

\begin{remark}
Notice that the tangent cones of $X_0$ and $X_1$ are ambient topologically equivalent to a cone over two unknotted circles, pinched at one point.
Thus Sampaio's theorem does not apply, and we need the ``broken bridge'' construction in this example.
Notice also that the broken bridge construction employed in this example allows one to construct examples
(both in $\mathbb{R}^3$ and in $\mathbb{R}^4$) of outer bi-Lipschitz equivalent, ambient topologically equivalent
but ambient Lipschitz non-equivalent surface germs with the tangent cones as small as a single ray.
\end{remark}

\begin{conjecture} Let $(S_0,0)$ and $(S_1,0)$ be two normally embedded real semialgebraic surface germs which are ambient topologically equivalent and
bi-Lipschitz equivalent with respect to either inner or outer metric (the two metrics are equivalent for normally embedded sets).
Then $S_0$ and $S_1$ are ambient Lipschitz equivalent.
\end{conjecture}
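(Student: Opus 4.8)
The plan is to leverage the normal embedding to reduce the conjecture, via an ambient ``straightening'', to a purely one-dimensional question about the links, and then to settle that question. First I would record that normal embedding makes the inner and outer metrics bi-Lipschitz equivalent (as noted in the statement), so the hypothesis provides a single bi-Lipschitz map $\phi\colon(S_0,0)\to(S_1,0)$ for both metrics. By Hardt's semialgebraic triviality each $S_i$ is homeomorphic to the cone over its link $L_i=S_i\cap S^{n-1}_\epsilon$, and the ambient topological equivalence, restricted to a small sphere, yields an ambient isotopy of $S^{n-1}$ carrying $L_0$ to $L_1$. On the metric side, $\phi$ together with normal embedding controls the metrics induced on the links and shows that $L_0$ and $L_1$ are bi-Lipschitz equivalent. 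Thus the two links are simultaneously ambient isotopic and bi-Lipschitz equivalent, and the remaining problem is to upgrade these two relations to a single ambient bi-Lipschitz equivalence.

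The decisive reduction I would aim for is an \emph{ambient} conical structure: an ambient bi-Lipschitz homeomorphism $(\mathbb{R}^n,S_i)\to(\mathbb{R}^n,\mathrm{Cone}(L_i))$ onto the straight cone over the link. Granting this for $i=0,1$, it suffices to produce a bi-Lipschitz self-homeomorphism of $S^{n-1}$ carrying $L_0$ to $L_1$, since coning such a map gives an ambient bi-Lipschitz homeomorphism of $\mathbb{R}^n$ taking $\mathrm{Cone}(L_0)$ to $\mathrm{Cone}(L_1)$. Everything then rests on the one-dimensional statement: ambiently isotopic, bi-Lipschitz equivalent normally embedded semialgebraic curves $L_0,L_1\subset S^{n-1}$ are ambient bi-Lipschitz equivalent. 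When the $L_i$ are smooth (for $n=4$, disjoint unions of smooth knots) the time-one map of a smooth ambient isotopy carrying $L_0$ to $L_1$ is a diffeomorphism of the compact sphere $S^{n-1}$, hence bi-Lipschitz, and coning it finishes the argument. The new work is at the singular points of the links, the vertices where several arcs of $S_i$ meet: there one must choose the ambient map to simultaneously realize the link isotopy and match the local metric data (H\"older exponents of the incident arcs), which I would do by building compatible local bi-Lipschitz models from semialgebraic local triviality and gluing them along the isotopy.

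The main obstacle, and the reason the statement is only conjectured, is the ambient conical structure invoked above. The inner (intrinsic) conical structure of $S_i$ is standard, but promoting a bi-Lipschitz radial trivialization of $S_i$ to a bi-Lipschitz radial trivialization of a full neighborhood of $S_i$ in $\mathbb{R}^n$ is open. The natural route is to extract an ambient Lipschitz vector field from a Lipschitz (Mostowski) stratification of the pair $(\mathbb{R}^n,S_i)$ and to straighten $S_i$ by its flow; the difficulty is to bound the transverse distortion of this flow as one approaches the origin. This is exactly where the normal embedding hypothesis should intervene: it forbids the configurations exploited in Examples~1--4, namely a pair of arcs of $S_i$ whose inner distance has exponent $1$ while their outer distance has a strictly larger exponent, so there is neither a ``broken bridge'' obstruction nor, via Sampaio's theorem \cite{S}, a tangent-cone obstruction to the straightening. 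Turning this qualitative absence of obstructions into the quantitative transverse Lipschitz estimate needed to integrate the field is the crux of the conjecture; once it is available, the reductions above reduce the problem to the link classification, which is tractable by the arguments sketched.
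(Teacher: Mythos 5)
The statement you are addressing is labeled a \emph{Conjecture} in the paper and is left open there: the paper contains no proof of it, so your proposal has to stand entirely on its own, and it does not. Setting aside the fact that you yourself flag the decisive step as open (which already makes this a research program rather than a proof), that step is actually false as stated. You ask for an ambient bi-Lipschitz homeomorphism of $(\mathbb{R}^n,S_i)$ onto $(\mathbb{R}^n,\mathrm{Cone}(L_i))$, the straight cone over the link. Take $S_i$ to be the $\beta$-H\"older triangle $\{0\le t\le 1,\ |x|\le t^\beta,\ y=z=0\}$ with rational $\beta>1$: it is semialgebraic and normally embedded, its link is an arc, and the straight cone over an arc is a $1$-H\"older triangle. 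Since the H\"older exponent $\beta$ is an invariant of inner (hence of outer, hence of ambient) bi-Lipschitz equivalence, this germ is not bi-Lipschitz homeomorphic to the straight cone over its link by \emph{any} map, ambient or not. So no such straightening exists in any category; the obstruction is metric (exponents), not one of the broken-bridge or tangent-cone obstructions that you correctly observe are excluded by normal embedding, and normal embedding therefore cannot rescue this step.

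The same example breaks your reduction to the one-dimensional link statement, independently of the straightening: the link at radius $\epsilon$ simply does not see the exponents of the germ. The $1$-H\"older and $\beta$-H\"older triangles have ambient isotopic and bi-Lipschitz equivalent links (both are arcs), yet the germs are not bi-Lipschitz equivalent even with respect to the inner metric; hence the link-level statement, even if proved in full, cannot imply the conjecture -- it discards exactly the data the conjecture is about. (A smaller gap of the same flavor: a bi-Lipschitz map of germs does not take the $\epsilon$-sphere to a sphere, so even extracting a bi-Lipschitz equivalence of links from $\phi$ requires an argument you have not supplied.) Any viable attack must transport the \emph{given} equivalences between $S_0$ and $S_1$ -- the ambient homeomorphism and the outer bi-Lipschitz map $\phi$ -- into an ambient bi-Lipschitz map directly, rather than factoring each germ through a conical model that is metrically too coarse. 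As far as the paper is concerned, the conjecture remains open.
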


\section{Main result}

\begin{theorem}\label{thm:main} For any semialgebraic surface germ $(S,0) \subset \mathbb{R}^4$ there exist infinitely many semialgebraic surface germs $(X_i,0) \subset \mathbb{R}^4$ such that  \vspace{0,2cm}

$1) \ $ For all  $i$, the germs $( X_i , 0)$ are ambient topologically equivalent to $(S,0)$; \vspace{0,1cm}

$2) \ $ All germs $( X_i , 0)$ are bi-Lipschitz equivalent with respect to the outer metric; \vspace{0,1cm}

$3) \ $ The tangent cones of all germs $X_i$ at the origin are ambient topologically equivalent; \vspace{0,1cm}

$4) \ $ For $ i \neq j$ the germs $X_i$ and $X_j$ are not ambient bi-Lipschitz equivalent.
\end{theorem}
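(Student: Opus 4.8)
The plan is to localize the broken-bridge construction of Example 4 near a smooth point of $S$, replacing the two knots $K_0,K_1$ of that example by an infinite family realizing all linking numbers. First I would choose a point $p$ on the link of $S$ at which $S$ is a single smooth sheet, and a small ball $U\subset\mathbb R^4$ around the corresponding arc $\alpha$ of the link, inside which $S$ is a normally embedded H\"older triangle with no self-tangency. Inside the cone over $U$ I would replace this smooth piece by a copy of the Example 4 gadget: a $(q,\beta)$-bridge $A$ together with the straight cone over a knot arc $K_i\setminus U$, glued to $S$ along $\partial U$ so that the inserted tangle is isotopic rel $\partial U$ to $\alpha$ and hence the link of $X_i$ is homeomorphic to the link of $S$. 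Since the gadget is then topologically a disk, this gives property $1)$: every $X_i$ is ambient topologically equivalent to $S$. The bridge exponent $q$ is fixed once and for all, the same for every $i$, and chosen strictly larger than all tangency exponents occurring in $S$; this choice will be used to make the bridge \emph{Lipschitz-canonical}.

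To obtain infinitely many germs I would choose the arcs $K_i\setminus U$ so that the two boundary strands of the bridge are joined outside $U$ with $i$ internal full twists, while the inserted tangle remains isotopic rel $\partial U$ to $\alpha$. A strand carrying only internal twists stays isotopic to $\alpha$, so $X_i\cong S$ for every $i$; yet breaking the bridge releases the twists, so that the two arcs produced by the breaking acquire mutual linking number $i$. The resulting broken links are thus pairwise non-homeomorphic, linking number being a topological invariant. Exactly as in Examples 3 and 4, the straight cones over the smooth arcs $K_i\setminus U$ are normally embedded and pairwise outer bi-Lipschitz equivalent by level-preserving maps; gluing these with the identity on $S\setminus U$ and on $A$ yields property $2)$. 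Since the twisting is internal and winds around neither the tangent cone of $S$ nor that of $A$, the tangent cones of all $X_i$ are ambient topologically equivalent, giving property $3)$.

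For property $4)$ I would argue by contradiction as in Example 4. Suppose $h:\mathbb R^4\to\mathbb R^4$ is bi-Lipschitz with $h(X_i)=X_j$. The inserted bridge $A$ is the unique feature of $X_i$ realizing the tangency exponent $q$, namely a pair of arcs with inner distance exponent $1$ and outer distance exponent $q$: inside $U$ there is no other self-tangency, while outside $U$ every tangency exponent of $S$ is strictly smaller than $q$. Hence $h$ must carry the bridge of $X_i$ onto the bridge of $X_j$. Invoking the uniqueness of a $(q,\beta)$-bridge up to an ambient bi-Lipschitz homeomorphism preserving the gadget, as in Example 4, and composing $h$ with such a homeomorphism supported in $U$, we may assume that $h$ maps $A\subset X_i$ identically onto $A\subset X_j$ and is the identity outside $U$. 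Breaking both bridges simultaneously then produces ambient-homeomorphic surfaces $\widetilde X_i$ and $\widetilde X_j=h(\widetilde X_i)$, whose broken links are distinguished by the linking numbers $i$ and $j$; hence $i=j$, and the germs $X_i$ are pairwise ambient Lipschitz non-equivalent.

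The main obstacle is establishing that the inserted bridge is Lipschitz-canonical for an \emph{arbitrary} ambient germ $S$: one must exclude that $h$ sends the inserted bridge to some self-tangency already present in $S$, and that the local modification creates spurious $(q,\beta)$-bridges. Both are controlled by the exponent choice $q>\max\{\text{tangency exponents of }S\}$, which is legitimate because a semialgebraic surface germ has only finitely many tangency exponents, together with the locality of the insertion at a smooth point, where $S$ itself contributes no bridge. Checking that the internal twists can be released into prescribed linking numbers while every inserted strand stays isotopic to $\alpha$ is a routine clasp construction, but it is exactly the step that manufactures the infinitude of pairwise inequivalent germs.
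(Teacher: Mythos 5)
Your strategy is genuinely different from the paper's, and the difference is exactly where your gaps lie. The paper never keeps the Lipschitz geometry of $S$ at all: it replaces $S$ by the straight cone over its link (ambient topologically, though not Lipschitz, equivalent to $S$) and runs a case analysis --- if the link contains a (possibly trivial) knot $L'$, the Step 1 model links $L_i$ are spliced in by connected sum with $L'$; if the link is a segment, the circle-link model $X_i$ is used after deleting a H\"older triangle $T_\beta$. Since a straight cone contains no pair of arcs with inner contact exponent $1$ and outer contact exponent $>1$, the bridge is automatically the unique abnormal feature there, and nothing like your exponent bound is ever needed. Your local-insertion route would have the merit of leaving $S$ untouched outside a small cone, but as written it has two genuine gaps.

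The first gap is fatal for some germs $S$: the segment case. The theorem quantifies over all surface germs, including those whose link is a union of arcs (e.g.\ a half-plane germ), where you are forced to insert the gadget into an arc component. After breaking the bridge you then obtain an arc with free endpoints plus a small circle winding $i$ times around it; such a circle can be slid along the arc and off its free endpoint by an ambient isotopy, so the broken configurations are ambient homeomorphic for all $i$ and your invariant vanishes. Your phrase ``the two arcs produced by the breaking acquire mutual linking number $i$'' hides this: linking number is only an invariant when both pieces are closed curves. This is precisely why the paper's Step 3 exists: there the segment-link germs are defined as $Z_i=X_i-T_\beta$, and in the proof of non-equivalence the deleted triangle is refilled by $h_{ij}(T_\beta)$, which, being a $\beta$-H\"older triangle attached along the original boundary arcs, is trapped in a $\beta$-horn far from the gadget and hence cannot affect the linking number; this reduces the segment case to the circle case. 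Your construction has no counterpart of this step, and it needs one.

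The second gap is your key finiteness claim: ``a semialgebraic surface germ has only finitely many tangency exponents'' is false as stated --- already inside a single smooth sheet, pairs of arcs realize every rational contact exponent $\ge 1$. What your argument actually needs is that the outer contact exponents of pairs of arcs whose \emph{inner} contact exponent equals $1$ (these are what characterize a bridge) are bounded above for the fixed germ $S$. That statement is true, but it is not routine: it requires, say, a pancake decomposition of $S$ together with a {\L}ojasiewicz inequality comparing $d(\cdot,S_k)$ with $d(\cdot,S_j\cap S_k)$ on each normally embedded pancake $S_j$, and it is the crux of making your bridge ``Lipschitz-canonical'' for arbitrary $S$. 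You flag this as the main obstacle yourself, but the justification you offer is a false statement rather than a proof; the paper sidesteps the entire issue by coning $S$ away.
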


\begin{figure}
 \centering
 \includegraphics[width=3in]{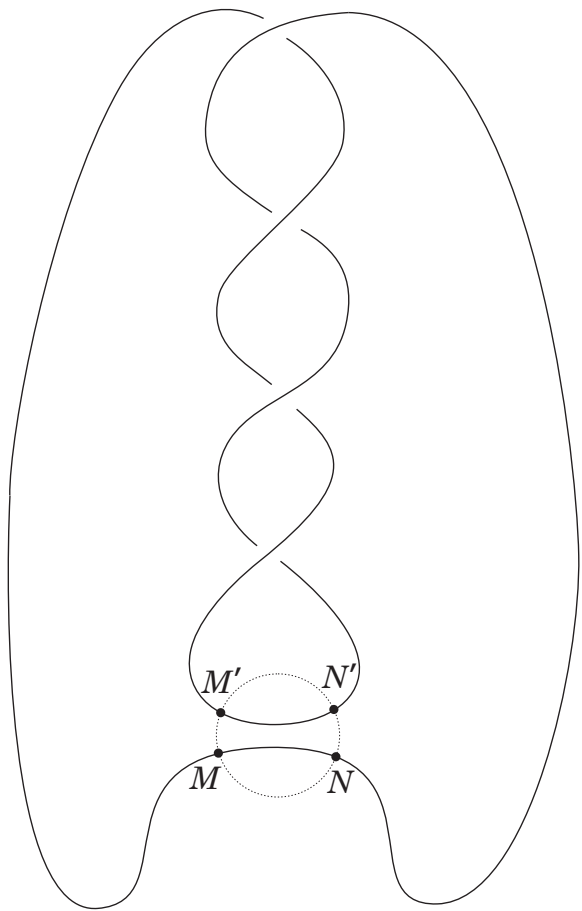}
 \caption{Link of the surface $X_2$ in Step 1.}\label{fig:knot2b}
 \end{figure}

 \begin{figure}
 \centering
 \includegraphics[width=4.5in]{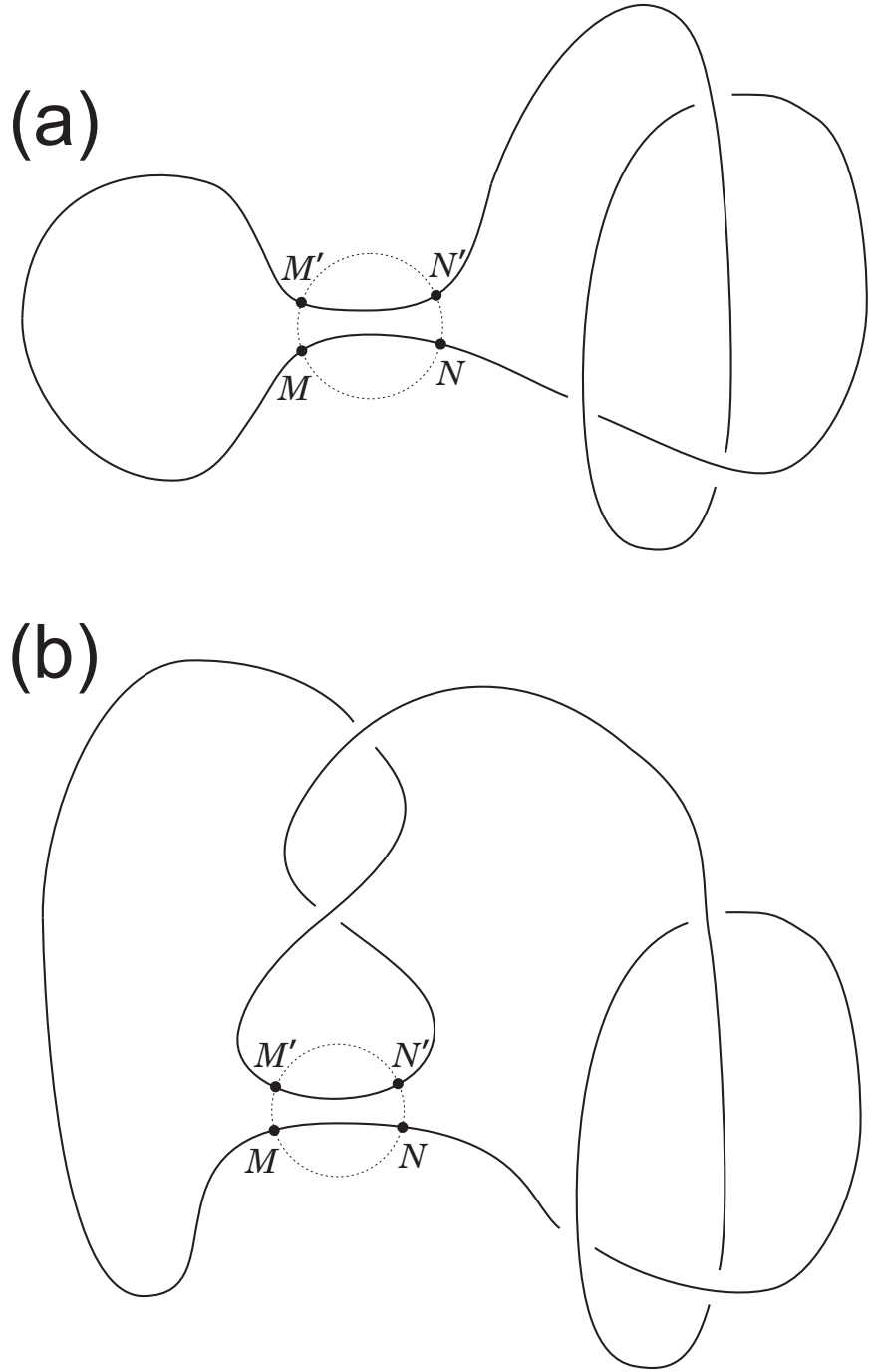}
 \caption{Links of the surfaces $Y_0$ and $Y_1$ in Step 2.}\label{fig:knot3}
 \end{figure}

{\bf Step 1} \ Consider first the case where the link $L$ of $S$ is an unknotted circle.
Our construction would be a modification of Example 4.
The germ $(X_i,0)$ is obtained from the surface $G$ considered in Example 4 (see Fig.~\ref{fig:G}a)
by attaching to it the straight cone over two segments in such a way that the braid connecting the pair of
points $(M,N)$ with the pair of points $(M',N')$ has $i$ twists.
The germs $(X_0,0)$ and $(X_1,0)$ are exactly those considered in Example 4. Their links are shown  in Fig.~\ref{fig:knot2}a and Fig.~\ref{fig:knot2}b.
The link of the germ $(X_2,0)$ is shown in Fig.~\ref{fig:knot2b}.
All these links are unknotted, thus all the surfaces $X_i$ are ambient topologically equivalent to the straight cone over the unknotted circle.

The same arguments as in the proof of Theorem \ref{thm:ex4} show that all germs $(X_i,0)$ are bi-Lipschitz equivalent with respect to the outer metric.
We now construct a bi-Lipschitz map $f_{ij}:(X_i,0)\to(X_j,0)$ which is the identity on the surface $G$. Since the complements of $G$ in $X_i$ and $X_j$ are straight cones, the map $f_{ij}$ on the cones can be defined as the conical extension of a bi-Lipschitz map on the links. The tangent cone of $X_i$ at the origin is ambient topologically equivalent to the cone over two unknotted circles pinched at a point.

From the other hand, the broken bridge construction described in Example 4 transforms $X_i$ into a set $\widetilde{X}_i$ with the link consists of two circles having the linking number $i$, same as the  number of twists of the braid connecting $(M, N)$ with $(M',N')$. This implies that $X_i$ is not ambient bi-Lipschitz equivalent to $X_j$ for $ i \neq j$. \vspace{0,2cm}

{\bf Step 2 } \ Consider the case where the link $L$ of the surface $S$ has a subset $L'$ which is a non-trivial knot.
Consider the link $L_i$ of the surface $X_i$ constructed in Step 1.
Since it is unknotted, its connected sum with $L'$ is ambient topologically equivalent to $L'$.
We define the surface $Y_i$ so that its link is the link $L$ of $S$ with $L'$ replaced by the connected
 sum of $L'$ and $L_i$. The surface $X'$ (except the cone over a segment of its link where $L'$
 is attached) is a subset of $Y_i$, and its complement in $Y_i$ is a cone over the link $L$ of $S$ (except the cone
 over a segment of $L'$ where it is attached to $L_i$). The links $L_0$ and $L_1$ from Example 4 with the trefoil knot attached are shown in Fig.~\ref{fig:knot3}a and Fig.~\ref{fig:knot3}b.

 By the same arguments as in Step 1 we obtain the conclusion of Theorem \ref{thm:main} in this case.\vspace{0,2cm}

{\bf Step 3} \ Consider the case where the link $L$ of $S$ is homeomorphic to a segment.
Consider a germ $( X_i , 0)$, defined in Step 1. Let $T_{\beta} \subset X_i$ be a H\"older triangle with the vertex at the origin, a subset of the conical part of $X_i$.

Let $ ( Z_i , 0 ) = ( X_i , 0) - T_{\beta}$. We claim that the germs $( Z_i, 0)$ satisfy the conditions \ $1,\, 2,\, 3,\, 4$ \ of Theorem \ref{thm:main}. The conditions $1$ and $2$ are evidently satisfied. The condition $3$ is true because the tangent cones of $X_i$and $Z_i$ at the origin are the same.

Suppose that $h_{ij} : ( \mathbb{R}^4 ,0) \rightarrow ( \mathbb{R}^4 ,0)$ is a bi-Lipschitz map,
such that $h_{ij}(Z_i, 0) = (Z_j , 0)$,  where $ i\ne j$. Let us apply the map $h_{ij}$ to $( X_i , 0)$. Notice that the set $ h_{ij}( X_i )$ is different from $X_i$ only in a small $\beta$-horn containing the H\"older triangle $T_{\beta}$.

If we apply the broken bridge construction to $h_{ij}( X_i )$, we get the same linking number for the two components as in Theorem 3.2.

Since the link of any surface germ either has a connected component homeomorphic to a segment,
or contains a subset which is a (possibly trivial) knot, this completes the proof of Theorem \ref{thm:main}.

\end{document}